\newcounter{contai}
\newtheorem{theorem}{Theorem}
\newtheorem{definition}{Definition}
\newtheorem{lemma}[theorem]{Lemma}
\newtheorem{proposition}[theorem]{Proposition} 
\newtheorem{remark}{Remark} \setlength{\textwidth}{16cm}
\newcommand{\NN}{{\rm\bf N}}
\newcommand{\ZZ}{{\rm\bf Z}}
\newcommand{\RR}{{\rm\bf R}}
\newcommand{\dpt}{\displaystyle}
\begin{document}

\title[Is there switching without suspended horseshoes?]{Is there switching without suspended horseshoes?}

\keywords{Homoclinic network, $\ZZ_2$-symmetry, switching, chaos, attracting set.}
\subjclass[2010]{Primary: 34C28; Secondary: 34C37, 34D45, 37C80, 37D45}

\author[Alexandre A. P. Rodrigues]{Alexandre A. P. Rodrigues}
\address[A.A.P. Rodrigues]{Centro de Matem\'atica da Universidade do Porto\\ 
and Faculdade de Ci\^encias da Universidade do Porto\\
Rua do Campo Alegre 687, 4169--007 Porto, Portugal}

\email[Alexandre Rodrigues]{alexandre.rodrigues@fc.up.pt}

\thanks{CMUP is supported by the European Regional Development Fund through the programme COMPETE and by the Portuguese Government through the Funda\c{c}\~ao para a Ci\^encia e a Tecnologia (FCT) under the project PEst-C/MAT/UI0144/2011.  AR was supported by the grant SFRH/BPD/84709/2012 of FCT. Part of this work has been written during the author stays in Nizhny Novgorod University supported by the grant RNF 14-41-00044}

\begin{abstract}
In general, infinite switching behaviour near networks is associated with the existence of suspended horseshoes. Trajectories that realize switching lie within these transitive sets. In this note, revisiting the equivariant Shilnikov scenario, we describe an attracting homoclinic network exhibiting forward switching and without suspended horseshoes in its neighbourhood.  Thus we provide an example of an asymptotically stable network exhibiting sensitive dependence on initial conditions. 
\medbreak
{\sc Resum\'e.}
En g\'en\'eral, le comportement de commutation autour des r\'eseaux  est associ\'e \`a l'existence de fers \`a cheval en suspension. Trajectoires qui r\'ealisent la commutation se situent dans ces ensembles transitifs. Dans cette note, visitant le sc\'enario classique de Shilnikov avec une sym\'etrie, nous d\'ecrivons un r\'eseau homocline qui pr\'esente commutation n'ayant pas des fers \`a cheval en suspension autour de lui. Plus pr\'ecisement nous donnons un exemple d'un r\'eseau asymptotiquement stable avec sensible d\'ependance des conditions initiales.
\end{abstract}

\maketitle

\section{Introduction}

The existence of homo and heteroclinic cycles in systems with symmetry is no longer a surprising feature. There are several examples of cycles arising in differential equations symmetric under the action of a specific compact Lie group.  Several definitions of heteroclinic cycles and networks have been given in the literature. 
These objects are associated with intermittent dynamics and used to model stop-and-go behaviour in various applications. Throughout the present article, we use the following definition valid for a finite dimensional system of ordinary equations (ODE):
\begin{definition}
A \emph{heteroclinic cycle} is a finite collection of equilibria $\{O_1,\ldots, O_n\}$ of the ODE together with a set of heteroclinic connections $\{\gamma_1,\ldots, \gamma_n\}$ where $\gamma_j$ is a solution of the ODE such that:
$$
\lim_{t \rightarrow -\infty}\gamma_j=O_j \qquad \text{and} \qquad  \lim_{t \rightarrow +\infty}\gamma_j=O_{j+1}
$$
and $O_{n+1} \equiv O_1$. When $n=1$, we say that the set $\{O_1, \gamma_1\}$ is a \emph{homoclinic cycle}. A \emph{homoclinic network} is a connected finite union of homoclinic cycles associated to the same equilibrium point. 
\end{definition}

\medbreak
 
Complex behaviour near a homo/heteroclinic network is often connected to the occurence of \emph{switching}. There are different types of switching, leading to increasingly complex behaviour near the network:  
\begin{itemize}
\item \emph{switching at a node}  \cite{Aguiar_games} characterised by existence of initial conditions, near an incoming connection to that node, whose trajectory follows any of the possible outgoing connections at the same node.  Incoming connection does not predetermine the outgoing choice at the node.
\medbreak
\item \emph{switching along a heteroclinic connection} \cite{ACL NONLINEARITY, CL}, which extends the notion of switching at a node to initial conditions whose trajectories follow a prescribed homo/heteroclinic connection.
\medbreak

\item \emph{infinite switching} \cite{ALR, HK, Rodrigues3}, which ensures that any sequence of connections in the network is a possible path near the network. This is different from \emph{random switching} in which trajectories shadow the network in a non-controllable  way \cite{PD}. 
\end{itemize}

The absence of \emph{switching along a connection} prevents \emph{infinite switching} and, therefore, chaotic behaviour near the network. The term \emph{switching} has also been used to describe simpler dynamics where there is one change in the choices observed in trajectories. This is the case described in \cite{KS}. In this case, the network consists of two cycles and trajectories are allowed to change from a neighbourhood of one cycle to a neighbourhood of the other cycle. This change is referred to as switching, although it is a very weak example of this phenomenon. In \cite{CLP},  the expression \emph{railroad switching} is used in relation to switching at a node. Complex behaviour near a network can also arise from the presence of noise-induced switching, see \cite{ASK}. We do not address the presence of noise in this note.

The authors of \cite{PD}  find a form of complicated switching (possibly not infinite)  leading to regular and irregular cycling near a network. There are several examples in the literature where the existence of infinite switching leads to chaotic behaviour near the network, see \cite{ALR, LR, Rodrigues2, LR3}. All the networks considered by these authors have at least one invariant saddle at which the linearized vector field has complex eigenvalues. In general, infinite switching is related with the existence of suspended horseshoes in its neighbourhood. See for example the works \cite{IR, LR, Rodrigues}. In these articles, the authors proved the existence of infinitely many initial conditions that realize a given forward infinite path. These solutions  lie on the sequence of suspended horseshoes that accumulate on the network. So, infinite switching seems to be connected with the existence of suspended horseshoes. The natural question is:
\medbreak
\textbf{(Q1)} are there examples of homo/heteroclinic networks exhibiting infinite switching and without suspended horseshoes around it?

\medbreak
 The main goal of this note is to answer this question. We will exhibit a class of vector fields whose flow has a homoclinic network exhibiting infinite switching and without suspended horseshoes around it. The example is based on the most famous and rich examples in the dynamical systems theory  -- the Shilnikov model of a homoclinic cycle to a saddle-focus with negative saddle value \cite{Shilnikov65, Shilnikov_67, TS}. Although we deal with the classic Shilnikov homoclinic loop, as far as we know, the combination \emph{$\ZZ_2$-symmetry} and \emph{negative saddle-value} of the saddle-focus is new. This note provides an example of a simple attracting network exhibiting sensitivity with respect to initial conditions.

\section{Preliminaries}
\label{Preliminaries}
Let $M$ be a compact three-dimensional manifold possibly without boundary and let $\mathcal{X}^r(M)$ the Banach space of $C^r$ vector fields on $M$ endowed with the $C^r$ Whitney topology with $r\geq 2$. 
Consider a vector field $f: M \rightarrow TM$ defining a system
 \begin{equation}
    \label{general}
\dot{x}=f(x), \qquad x(0)=x_0\in M
 \end{equation}
and denote by $\varphi(t,x_0)$, with $t \in \RR$, the associated flow (with initial condition $x_0$). 

\subsection{Homoclinic Network}
In this paper, we will be focused on an equilibrium $O$ of (\ref{general}) such that its spectrum (\emph{i.e.} the eigenvalues of $df|_O$) consists of one pair of non-real complex numbers with negative real part and one positive real eigenvalue. This is what we call a \emph{saddle-focus}.  A \emph{homoclinic connection} associated to $O$ is a trajectory biasymptotic to $O$ in forward and backward times. A \emph{homoclinic network} associated to $O$ consists of the equilibrium and a finite union of homoclinic connections associated to O. 

\subsection{Homoclinic switching}
Let $\Gamma=\{O\}\cup \left(\bigcup_{i=1}^m \gamma_i\right)$ be a homoclinic network associated to $O$ with $m\in \mathbf{N}\backslash \{1\}$.

\begin{definition}
\label{Def2}
If $k\in\mathbf{N}$, a finite path of order $k$ on $\Gamma$ is a sequence
     $$
     \left(\gamma_{\sigma(1)},\ldots,\gamma_{\sigma(k)}\right) \in \{\gamma_1,\ldots,\gamma_m\}^k
     $$
     of homoclinic connections, where $\sigma:\{1,\ldots,k\}\rightarrow \{1,\ldots,m\}$ is an arbitrary map. We use the notation $\sigma^{k}$ for this type of finite path. For an infinite path, take $\sigma:\mathbf{N}\rightarrow \{1,\ldots,m\}$.
\end{definition}

Let $N_{\Gamma}$ be a neighbourhood of the network $\Gamma$ and let $V_{O}\subset N_{\Gamma}$ be a neighbourhood of  $O$. For each homoclinic connection $\gamma_i$ in $\Gamma$, consider a point $p_i\in \gamma_i$ and a neighbourhood $V_i\subset N_{\Gamma}$ of $p_i$. The collection of neighbourhoods $\{V_O,V_1,\ldots,V_m\}$ should be pairwise disjoint.

\begin{definition}
Given neighbourhoods as above,  we say that the trajectory of a point $q$ follows a finite path
$\sigma^k$, if there exist two monotonically increasing sequences of times $(t_{j})_{j\in \{1,\ldots,k+1\}}$ and $(z_{j})_{j\in \{1,\ldots,k\}}$ such that for all $j \in \{1,\ldots,k\}$, we have $t_{j}<z_{j}<t_{j+1}$ and:

\begin{enumerate}
\item[(i)]
$\varphi (t,q)\subset N_{\Gamma}$ for all $t\in ]t_{1},t_{k+1}[$;
\item[(ii)]
$\varphi (t_{j},q) \in V_O$ for all $j \in \{1,\ldots,k+1\}$ and $\varphi (z_{j},q)\in V_{\sigma(j)}$ for all $j \in \{1,\ldots,k\}$;
\item[(iii)] for all $j=1,\ldots,k-1$ there exists a proper subinterval $I\subset ]z_{j},z_{j+1}[$ such that, given $t\in ]z_{j},z_{j+1}[$, $\varphi(t,q)\in V_O$ if and only if $t\in I$.
\end{enumerate}
\end{definition}
The notion of a trajectory following an infinite path can be stated similarly. Along the paper, when we refer to points that follow a path, we mean that their trajectories do it.  Based in  \cite[\S 2]{ALR}, we define:

\begin{definition} There is:
\label{Def1}
\begin{enumerate}
\item[(i)]  \emph{finite switching} near $\Gamma$ if  for each finite path and for each neighbourhood $N_{\Gamma}$ there is a trajectory in $N_{\Gamma}$ that follows it and
\item[(ii)]  \emph{infinite switching} (or simply \emph{switching}) near $\Gamma$  by requiring that for each infinite path and for each neighbourhood $N_{\Gamma}$ there is a trajectory in $N_{\Gamma}$ that follows it.
\end{enumerate}
\end{definition}

An infinite path on $\Gamma$ can be considered as a pseudo-orbit of (\ref{general}) with infinitely many discontinuities. Switching near $\Gamma$ means that any pseudo-orbit in $\Gamma$ can be realized. In \cite{HK}, using connectivity matrices, the authors gave an equivalent definition of switching, emphasising the possibility of coding all trajectories that remain in a given neighbourhood of the network in both finite and infinite times.

\section{Main result}
Our object of study is the dynamics around a special type of homoclinic network, for which we give a rigorous description here. Specifically we consider  a family of vector fields in $\mathcal{X}^r(M)$, $r\geq 2$, with a flow given by the unique solution $\varphi(t,x) \in M$ of (\ref{general}) satisfying the following hypotheses:
\medbreak
\begin{enumerate}
\item[\textbf{(H1)}] The point $O$ is a saddle-focus equilibrium where the eigenvalues of $\mathrm{d}f|_{O}$ are $-C \pm \alpha i$ and  $E$, where $C, E, \alpha \in \RR^+$ and $C>E$.
\medbreak
\item[\textbf{(H2)}]  There is a trajectory $\gamma_1$ biasymptotic to $O$.
\medbreak
\item[\textbf{(H3)}] The vector field is $\ZZ_2$-symmetric under the action of $-Id$.
\medbreak
\end{enumerate}

Hypotheses \textbf{(H2)} and \textbf{(H3)} imply the presence of an additional trajectory, say $\gamma_2=-Id(\gamma_1)$. Thus $\Gamma=\{O\}\cup \gamma_1\cup \gamma_2$ is a homoclinic network; in particular $m=2$ in Definition \ref{Def2}.  We address the reader to \cite{Golubitsky} for more details about equations with symmetry. 
\medbreak
There are several papers in the literature dealing with the case where the inequality $C>E$ fails, all of them dealing with the complexity of solutions in a neighbourhood of $\Gamma$, namely a sequence of suspended horseshoes accumulating on the network. Finitely many of these horseshoes survive under the addition of generic perturbing terms.  Our main result says that although $\Gamma$ is attracting (the statistical limit set associated to $\Gamma$ is the point $O$), the approach to  the network is chaotic. 

\begin{theorem}
\label{Main}
For a vector field $f: M \rightarrow TM$ whose flow satisfies \textbf{(H1)}--\textbf{(H3)}, the following conditions hold:
\begin{itemize}
\item[\textbf{(a)}] there are no suspended horseshoes in the neighbourhood of $\Gamma$;
\item[\textbf{(b)}] the network $\Gamma$ is asymptotically stable, in the sense that all trajectories starting in a small open neighbourhood of $\Gamma$ are attracted to the network;
\item[\textbf{(c)}] there is infinite switching near $\Gamma$, realized by infinitely many initial conditions.
\end{itemize}
 \end{theorem}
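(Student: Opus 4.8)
The plan is to reduce the flow near $\Gamma$ to a first-return map on a cross-section and to read off all three statements from the explicit form of that map; the decisive quantity throughout will be the exponent $\delta:=C/E$, which is $>1$ precisely because of the negative saddle value $C>E$.

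First I would linearise the flow (up to a smooth change of coordinates) in a neighbourhood $V_O$ of $O$ using \textbf{(H1)}: in cylindrical coordinates $(r,\theta,z)$ adapted to the $(-C\pm\alpha i)$ spiral plane and the $E$-axis, the local flow is $\dot r=-Cr$, $\dot\theta=\alpha$, $\dot z=Ez$. I take the incoming section $\Sigma^{\mathrm{in}}=\{r=\rho,\ |z|\le\rho\}$ (a cylinder with coordinates $(\theta,z)$, on which $W^s_{\mathrm{loc}}(O)=\{z=0\}$) and the two outgoing disks $\Sigma^{\mathrm{out}}_\pm=\{z=\pm\rho\}$ pierced by the two branches of $W^u_{\mathrm{loc}}(O)$. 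Integrating the linear flow gives the local map $\Phi_{\mathrm{loc}}\colon\Sigma^{\mathrm{in}}\to\Sigma^{\mathrm{out}}_{\mathrm{sgn}(z)}$,
\[
r_{\mathrm{out}}=\rho^{1-\delta}\,|z|^{\delta},\qquad \phi_{\mathrm{out}}=\theta+\tfrac{\alpha}{E}\ln(\rho/|z|),
\]
so the exit branch — and hence the connection followed, $\gamma_1$ or $\gamma_2$ — is determined by $\mathrm{sgn}(z)$. By \textbf{(H2)}--\textbf{(H3)} the two homoclinic connections give $C^r$ global maps $\Phi^{(1)}_{\mathrm{glob}},\Phi^{(2)}_{\mathrm{glob}}$ from $\Sigma^{\mathrm{out}}_\pm$ back to $\Sigma^{\mathrm{in}}$, intertwined by $-Id$, each a diffeomorphism carrying the centre of the disk (the point of $W^u$) to a point of $W^s_{\mathrm{loc}}\cap\Sigma^{\mathrm{in}}=\{z=0\}$. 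Composing, the first-return map $\mathcal R=\Phi_{\mathrm{glob}}\circ\Phi_{\mathrm{loc}}$ has leading form
\[
z_{1}= |z|^{\delta}\,g(\phi_{\mathrm{out}})+\text{h.o.t.},\qquad \phi_{\mathrm{out}}=\theta+\tfrac{\alpha}{E}\ln(\rho/|z|),
\]
where $g$ is the (generically sign-changing) linearisation of the global map in the height direction.

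For \textbf{(a)} and \textbf{(b)} I would estimate $D\Phi_{\mathrm{loc}}$. Differentiating the displayed formulas, every entry is $O(|z|^{\delta-1})$, which tends to $0$ as $|z|\to0$ exactly because $\delta>1$; note this kills even the ``winding'' term $\partial\phi_{\mathrm{out}}/\partial z\sim 1/z$, since it is multiplied by $r_{\mathrm{out}}\sim|z|^{\delta}$. Hence, after shrinking $\rho$, $\|D\mathcal R\|<1$ wherever $\mathcal R$ is defined, so no compact invariant set can carry an expanding direction and there is no suspended horseshoe; since any orbit recurring near $\Gamma$ must repeatedly cross $\Sigma^{\mathrm{in}}$, this rules them out throughout $N_\Gamma$, giving \textbf{(a)}. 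The same estimate yields $|z_{n+1}|\le K|z_n|^{\delta}$, whence $|z_n|\to0$: entry heights shrink, successive returns pass ever closer to $O$ with local dwell time $\tfrac1E\ln(\rho/|z_n|)\to\infty$, and the orbit shadows $\Gamma$ with vanishing excursions. Thus every point of $N_\Gamma$ has $\omega$-limit in $\Gamma$, so $\Gamma$ is asymptotically stable, and the diverging dwell time forces the statistical limit set to be $\{O\}$. This is \textbf{(b)}.

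The main obstacle is \textbf{(c)}: extracting switching from a map that is a \emph{contraction}. The resolution is that the itinerary is encoded by the sign sequence $(\mathrm{sgn}(z_n))_n$ — $z_n>0$ forces $\gamma_1$, $z_n<0$ forces $\gamma_2$ — and this sign is governed by the winding term $\tfrac{\alpha}{E}\ln(\rho/|z|)$, not by any expansion. The key geometric fact I would establish is a spanning (Markov-like) property: the $\mathcal R$-image of any region of $\Sigma^{\mathrm{in}}$ reaching down to $W^s_{\mathrm{loc}}=\{z=0\}$ is a logarithmic spiral accumulating on the re-entry point in $W^s_{\mathrm{loc}}\cap\Sigma^{\mathrm{in}}$, hence it crosses $\{z=0\}$ infinitely often and meets both sign-halves of the section, each again in a region reaching $W^s_{\mathrm{loc}}$. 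Granting this, I realise any infinite path $\sigma\colon\NN\to\{1,2\}$ by the standard nested-intersection coding: inductively choose nonempty compact sets $R_{\sigma^{k+1}}\subset R_{\sigma^k}$ of points whose first $k+1$ returns follow $\sigma$, using the spanning property to find inside $R_{\sigma^k}$ a sub-region mapping into the $\sigma(k+1)$-half while still reaching $W^s_{\mathrm{loc}}$; then $\bigcap_k R_{\sigma^k}\neq\varnothing$ and any point of it follows $\sigma$. Since $\rho$ is arbitrary this holds in every $N_\Gamma$ and is realised by infinitely many initial conditions, giving \textbf{(c)}. The delicate step is the spanning property — verifying that each successive image genuinely completes full turns across $\{z=0\}$ — and it is precisely the infinite winding of the saddle-focus, rather than hyperbolicity, that supplies it.
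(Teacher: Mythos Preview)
Your proposal is correct and follows essentially the same route as the paper: for \textbf{(c)} you use the identical mechanism --- $C^1$-linearization at the saddle-focus, the logarithmic-spiral image of a segment under the local map, the resulting infinite transverse crossings of $W^s_{\mathrm{loc}}(O)$ by the return image, and a nested compact-set (Cantor intersection) coding of finite paths to obtain infinite switching. For \textbf{(a)} and \textbf{(b)} the paper does not argue at all but simply cites the classical references (Shilnikov; Glendinning--Sparrow), whereas you supply the standard contraction estimate $\|D\mathcal R\|=O(|z|^{\delta-1})$ with $\delta>1$; this is exactly the content of those references, so there is no substantive divergence.
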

 
 The proof of \textbf{(a)} and \textbf{(b)} may be found in the works \cite{GS1, Shilnikov65}. See also \cite{hom, TS}. The proof of \textbf{(c)} is addressed in \S \ref{Sec6} of the present note. In particular, in a small neighbourhood of $\Gamma$, $N_\Gamma$ , if $g$ is $C^2$-close to $f$, the set of non-wandering trajectories of $f$ in $N_\Gamma$ consists of $O$ and one or two attracting limit cycles. Using Theorem \ref{Main}, we conclude that the transient  dynamics should visit the ghost of the homoclinic cycles (in any prescribed order) before falling on the basins of attractions of the periodic solutions.
 
 \medbreak
 \subsection*{Structure of this Note:} In \S\ref{localdyn} we linearize the vector field around the saddle-focus, obtaining an isolating block around it; this section is concerned with introducing the notation for the proof of switching. In \S \ref{global}, we obtain a geometrical description of the way the flow transforms a segment of initial conditions across the stable manifold of $O$. This curve is wrapped around the isolating block and accumulates on the unstable manifold of $O$ ($\lambda$-Lemma for flows) and, in particular, on the next connection. The local stable manifold of $O$ crosses infinitely many times the previous curve. The geometric setting is explored in \S \ref{Sec6} to obtain intervals of the segment that are mapped by the flow into curves next to $O$ in a position similar to the first one. This allows to establish the recurrence neeeded for infinite switching. For any infinite sequence of homoclinic connections, say:
 $$
 \gamma_2 \rightarrow \gamma_1 \rightarrow \gamma_1 \rightarrow \gamma_1 \rightarrow \gamma_2 \rightarrow \gamma_2 \rightarrow \ldots
 $$
 without using perturbation theory, we find infinitely many trajectories that visits the neighbourhoods of these connections in the same sequence. Throughout this note, we have endeavoured to make a self
contained exposition bringing together all topics related to the proofs. We have stated short lemmas and we have drawn illustrative figures to make the paper easily readable.

\section{Local Maps}
\label{localdyn}

The behaviour of the vector field $f$ in the neighbourhood of the network $\Gamma$ is given, up
to topological equivalence, by the linear part of $f$ in the neighbourhood of $O$ and by
the transition map between two discs transversal to the flow in those neighbourhoods. In this section, we
choose coordinates in the neighbourhood of $O$ in order to put $f$ in the canonical form
and we assume that the transition map is linear.
The main point is the application of Samovol's Theorem \cite{Samovol} to $C^1$--linearize the flow around $O$, and to introduce cylindrical coordinates around the equilibrium. There are no $C^1$-resonances here. 
We use neighbourhoods with boundary transverse to the linearized flow.

\subsection{$C^1$--linearization}
\label{free}
Since $O$ is hyperbolic, by Samovol's Theorem \cite{Samovol}, the vector field $f$ is $C^{1}$--conjugate to its linear part in a $\varepsilon$-small open neighbouhood around $O$, $\varepsilon>0$. We choose cylindrical coordinates $(\rho ,\theta ,z)$
near $O$ so that the linearized vector field can be written as:
\begin{equation}
\label{local map O}
\left\{ 
\begin{array}{l}
\dot{\rho}=-C\rho \\
 \dot{\theta}=\alpha \\
 \dot{z}=E z
\end{array} .
\right.
\end{equation}

\begin{figure}[ht]
\begin{center}
\includegraphics[height=7cm]{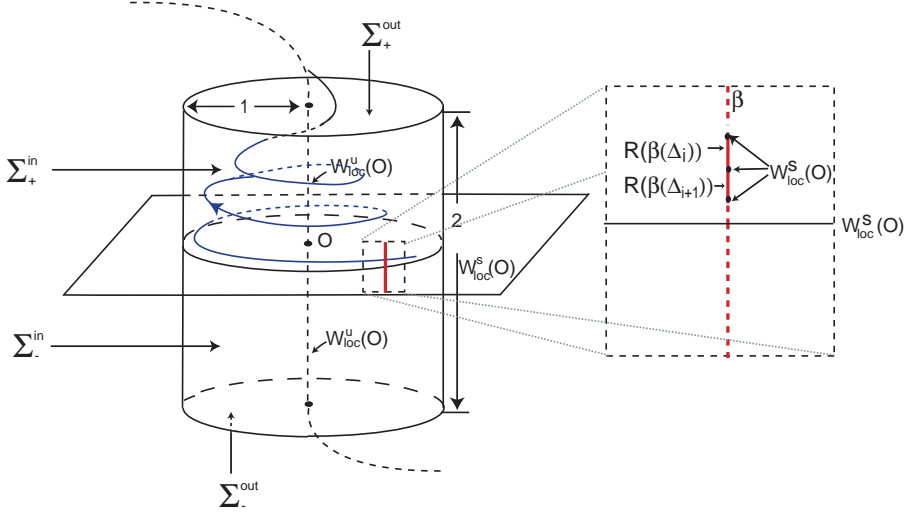}
\end{center}
\caption{\small  Cylindrical neighbourhood of the saddle-focus $O$. For $i,j\in\{-,+\}$, on a segment $\beta$ there are infinitely many subsegments that are mapped by $R$ into $\Sigma^{in}_i$, each one containing a point mapped into $W^s_{loc}(O)$. The small sub-segments contain smaller ones that are mapped into by $R^2$ into $\Sigma^{in}_j$ and the process may be continued forming a nested sequence. }
\label{neigh_vw}
\end{figure}

After a linear rescaling of the local variables, we consider a cylindrical neighbourhood of $O$ of radius $1$ and height $2$ that we denote by $V$ -- see Figure \ref{neigh_vw}. Their boundaries consist of three components: the cylinder wall parametrized by $x\in \RR\pmod{2\pi}$ and $|y|\leq 1$ with the usual cover $ (x,y)\mapsto (1 ,x,y)=(\rho ,\theta ,z)$ and two disks (top and bottom). We take polar coverings of these disks $(r,\phi )\mapsto (r,\phi , j)=(\rho ,\theta ,z)$ 
where $j\in \{-1,+1\}$, $0\leq r\leq 1$ and $\phi \in \RR\pmod{2\pi}$. By convention, the intersection points of $\Gamma$ with the wall of the cylinder has $0$ and $\pi$ angular coordinate. The set of points in the cylindrical wall with positive (resp. negative) second coordinate  is denoted by $\Sigma^{in}_+$. (resp $\Sigma^{in}_-$). The similar holds for $\Sigma^{out}$. 
\medbreak
As depicted in Figure \ref{neigh_vw}, the cylinder wall of $V$ is denoted by $\Sigma^{in}$.  Note that $W^s_{loc}(O)$ corresponds to the circle $ y=0$. The top and the bottom of
the cylinder are simply denoted  by $\Sigma^{out}$. The boundary of $V$ can be written as the disjoint union: $$\partial V  =   \Sigma^{in} \dot{\cup} \Sigma^{out}  \dot{\cup}  \Omega, $$ where $\Omega$ is the part of $\partial V$ where the flow is not transverse.
It follows by the above construction that:

\begin{lemma} Let $j\in \{-,+\}$. Solutions starting:
\begin{enumerate}
\item at $\Sigma^{in}$ go inside the cylinder $V$ in positive time;
\item at $\Sigma^{out}$ go outside the cylinder $V$ in positive time;
\item at $\Sigma_j^{in} \backslash W^s(O)$ leave the cylindrical neighbourhood $V$ at $\Sigma_j^{out}$.
\end{enumerate}
\end{lemma}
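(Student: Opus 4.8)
The plan is to integrate the linearized system (\ref{local map O}) explicitly and then read off the behaviour of each coordinate on the boundary faces of $V$. With initial condition $(\rho_0,\theta_0,z_0)$ the unique solution is
\[
\rho(t)=\rho_0\, e^{-Ct},\qquad \theta(t)=\theta_0+\alpha t,\qquad z(t)=z_0\, e^{Et},
\]
so that the radial coordinate decays monotonically to $0$ while, whenever $z_0\neq 0$, the height grows monotonically in modulus and retains the sign of $z_0$. Items (1) and (2) then reduce to comparing the sign of the vector field with the outward normal on the relevant face, and item (3) follows by tracking where the orbit first meets $\partial V$ again.

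For (1), recall that $\Sigma^{in}$ is the wall $\rho=1$, whose outward normal points in the $+\rho$ direction; since $\dot\rho=-C\rho=-C<0$ there, the field points strictly inward, so the solution enters the interior of $V$ for small $t>0$. For (2), on the top disk $z=1$ the outward normal is $+z$ and $\dot z=Ez=E>0$, while on the bottom disk $z=-1$ the outward normal is $-z$ and $\dot z=-E<0$; in both cases the field is strictly outward, so the orbit leaves $V$ for small $t>0$. These two items need only a sign check and no estimates.

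The substantive claim is (3). I would start at a point of $\Sigma^{in}_j\setminus W^s(O)$, that is $\rho_0=1$ and $z_0\neq 0$ with $j=\mathrm{sign}(z_0)$. Since $\rho(t)=e^{-Ct}<1$ for every $t>0$, the trajectory never returns to the wall and therefore cannot exit through $\Sigma^{in}$. Meanwhile $|z(t)|=|z_0|\,e^{Et}$ increases monotonically from $|z_0|\le 1$ to $+\infty$, so there is a unique first time
\[
T=-\tfrac1E\ln|z_0|>0 \qquad\text{with}\qquad |z(T)|=1 .
\]
Because $z(t)$ keeps the sign of $z_0$, we get $z(T)=j$, and $\rho(T)=e^{-CT}<1$. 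Hence the exit point lies in the interior of the disk of height $j$ and radius strictly less than $1$, i.e. on $\Sigma^{out}_j$ (and not on the non-transverse edge $\Omega$), while $\rho(t)\le 1$ and $|z(t)|<1$ for $0\le t<T$ confirm that the orbit remains in $V$ up to time $T$. This establishes (3).

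The only delicate point — the closest thing to an obstacle — is the bookkeeping of which face is hit: one must rule out escape through the wall, which is guaranteed by the monotone decay of $\rho$, and one must land on the face carrying the correct sign $j$, which is guaranteed by the invariance of $\mathrm{sign}(z)$ under the flow together with $\rho(T)<1$. Everything else is immediate from the closed-form solution, so no further analysis is required.
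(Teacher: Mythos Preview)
Your proof is correct and is exactly the verification that the paper leaves implicit: the paper states the lemma as an immediate consequence of the linearized system (\ref{local map O}) (``It follows by the above construction that\ldots'') without writing out the argument. Your explicit integration of the linear flow and the sign checks on each boundary face are precisely the details one fills in, and they match the time-of-flight formula (\ref{time_v}) and the local map (\ref{local_v}) that the paper records immediately afterwards.
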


If $(x,y) \in \Sigma^{in}\backslash W^s_{loc}(O)$, let $T(x,y)$ be the time of flight through $V$ of the trajectory whose initial condition is $(x,y)$. It only depends on $y\neq 0$ and is given explicitly by 
\begin{equation}
\label{time_v}
T(x,y)=\frac{1}{E} \ln \left(\frac{1}{|y|}\right).
\end{equation}
In particular $\lim_{y\rightarrow 0} T(x,y)= + \infty$. Now, we obtain the expression of the local map that sends points in the boundary where the flow goes in, into points in the boundary where the flows goes out. The local map
$\Phi_{O }:\Sigma^{in} \rightarrow \Sigma^{out}$  near $O$ is given by
\begin{equation}
\begin{array}{c}
\Phi_{O }(x,y)=\left(y^{\delta},-\frac{\alpha}{E}\ln |y|+x\right)=(r,\phi)
\end{array}
\label{local_v}
\end{equation}
where $\delta=\frac{C_{ }}{E_{}} >1$ is the \emph{saddle index} of $O$. Observe that if $x_0 \in \RR$ is fixed, then $$\lim_{y\rightarrow 0} |\Phi_{O }(x_0,y)|= (0, + \infty).$$

The same expression holds  for the local map from the other connected component of $\Sigma^{in}_- \backslash W^s_{loc}(O )$ to $\Sigma^{out}_-$ (where $y<0$) with the exception that the first coordinate of $\Phi_{O}$ changes its sign. In \cite{hom}, the author obtains precise asymptotic expansions for the local map $\Sigma^{in} \rightarrow \Sigma^{out}$. In the present article, we omit high order terms because they are not needed to our purposes.

\subsection{Transition Maps and First Return Maps}
\label{Transition Maps}
Let $j\in \{+,-\}$. 
By the \emph{Tubular Flow Theorem} \cite{PM}, solutions starting near $\Sigma_j^{out} \cap W^u_{loc}(O)$ follow one of the connections in $\Gamma$. We may then define the transition map $\Psi: \Sigma^{out} \rightarrow \Sigma^{in}$ by flow box fashion and the return map to $\Sigma^{in}$: $$R=\Psi \circ \Phi_O: \Sigma^{in} \rightarrow \Sigma^{in}.$$  Hereafter, we concentrate our attention on initial conditions that do not escape from $N_\Gamma$; otherwise take a smaller subset in $ \Sigma^{in}$ where the returm map is well defined. The explicit expression for the return map is highly nonlinear since the distortion near the hyperbolic saddle-foci is tremendous. For our purposes, it is not needed.

\section{Local Geometry}
\label{global}
The coordinates and notations of \S \ref{localdyn} will be used to study the geometry of the local dynamics near the saddle-focus. This is the main goal of the present section but first we introduce the concept of a \emph{segment} on $\Sigma^{in}$.

\begin{definition}
Let $j\in \{-,+\}$. A \emph{ segment }$\beta $:
\begin{enumerate}
\item \label{segment w} 
{\em on} $\Sigma^{in}_j$  is a smooth regular parametrized curve
$\beta :[0,1]\rightarrow \Sigma^{in}_j$ that meets $W^{s}_{loc}(O )$ at the point $\beta (0)$ and such that, writing $\beta (s)=(x(s),y(s))$, both $x$ and $y$ are monotonic and bounded functions of $s$.
\end{enumerate}
\end{definition}

The definition of \emph{segment} may be relaxed: the components do not need to be monotonic for all $s \in [0,1]$. We use the assumption of monotonicity to simplify the arguments.

\begin{definition}Let $a \in \RR$, $D$ be a disc centered at $p\in\RR^2$ and $\ell$ a line passing through $p$.
\begin{enumerate}
\item A \emph{spiral} on $D$ around the point $p$ is a smooth curve
$$\alpha :[a, +\infty[ \rightarrow D,$$ satisfying $\dpt \lim_{s\to +\infty }\alpha (s)=p$ and such that if
$\alpha (s)=(r(s),\phi(s))$ is its expression in polar coordinates around $p$ then:
\begin{enumerate}
\item the map $r$ is bounded by two monotonically decreasing maps converging to zero as $s \rightarrow +\infty$;
\item the map $\phi$ is monotonic for some unbounded subinterval of $[a,+\infty[$ and
\item $\lim_{s\to +\infty}|\phi(s)|=+\infty$.
\end{enumerate}
\item A \emph{double spiral} on $D$ around the point $p$ is the union of two spirals accumulating on $p$ and a curve connecting the other end points. 
\item Given a \emph{spiral} $\alpha$ on $D$ around the point $p$, an \emph{half circle} bounded by $\ell$ is a connected component of $\alpha \backslash \ell$.
\end{enumerate}

\end{definition}

 The next result characterizes the local dynamics near the saddle-focus.

\begin{lemma}
\label{Lemma3}
Let $j \in \{+,-\}$. A \emph{ segment }$\beta $ on   $\Sigma^{in}_j$ is mapped by $\Phi_O$ into a spiral on $\Sigma^{out}_j$  accumulating on the point defined by $\Sigma^{out}_j \cap W^u_{loc}(O)$.

\end{lemma}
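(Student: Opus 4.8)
The plan is to write the segment in the coordinates of \S\ref{localdyn}, apply the explicit local map (\ref{local_v}), reparametrise so that the accumulation point is reached as the parameter tends to $+\infty$, and then verify the three defining properties of a spiral one at a time. Fix $j=+$; the case $j=-$ is identical up to the sign change in the first coordinate of $\Phi_O$ noted after (\ref{local_v}), which in polar coordinates amounts to a rotation of the target disc by $\pi$ and does not affect the conclusion. Write $\beta(s)=(x(s),y(s))$ with $y(0)=0$, $y(s)>0$ for $s\in\,]0,1]$, $y$ monotone, and $x$ monotone and bounded. Composing with (\ref{local_v}) gives the image curve
\begin{equation*}
\Phi_O(\beta(s))=\left(y(s)^{\delta},\,-\frac{\alpha}{E}\ln y(s)+x(s)\right)=(r(s),\phi(s)).
\end{equation*}
Since $\delta>0$ and $y(s)\to 0$ as $s\to 0^+$, the radial coordinate $r(s)=y(s)^{\delta}\to 0$, so the curve accumulates on the centre $r=0$ of $\Sigma^{out}_+$, which is exactly the point $\Sigma^{out}_+\cap W^u_{loc}(O)$ (the trace of the $z$-axis $\rho=0$). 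Thus the accumulation takes place as $s\to 0^+$, and the first task is to turn this into the parametrisation required by the definition of a spiral.

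To do so I would introduce the new parameter $u=-\ln y(s)$. Because $y$ is monotone with $y(0)=0$, the map $s\mapsto u$ is a smooth, strictly monotone bijection from $\,]0,1]$ onto $[u_1,+\infty[$ with $u_1=-\ln y(1)$, and $u\to+\infty$ precisely when $s\to 0^+$; let $s(u)$ denote its inverse. In this parameter the image has polar coordinates $r(u)=e^{-\delta u}$ and $\phi(u)=\tfrac{\alpha}{E}\,u+x(s(u))$. Property (a) is then immediate, since $r(u)=e^{-\delta u}$ is itself monotonically decreasing to $0$ and is trivially squeezed between two such maps. Property (c) is also straightforward: the twist term $\tfrac{\alpha}{E}\,u$ grows linearly to $+\infty$ while $x(s(u))$ stays inside the bounded range of $x$, so $\phi(u)\to+\infty$ and in particular $|\phi(u)|\to+\infty$.

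The only substantive point is property (b), monotonicity of the angular coordinate on an unbounded subinterval, and this is where I expect the main difficulty to lie. Writing $g(u)=x(s(u))$, the function $g$ is monotone (a composition of monotone maps) and bounded, so $\phi(u)=\tfrac{\alpha}{E}\,u+g(u)$. If $g$ is non-decreasing there is nothing to prove: $\phi$ is a sum of increasing functions and is monotone on all of $[u_1,+\infty[$. The delicate case is $g$ non-increasing, where one must show that the linear twist coming from the rotation $\dot\theta=\alpha$ eventually dominates the variation of $x$. Differentiating gives $\phi'(u)=\tfrac{\alpha}{E}+g'(u)$ with $g'(u)=-y(s)\,\mathrm{d}x/\mathrm{d}y$, so it suffices that $y\,\mathrm{d}x/\mathrm{d}y\to 0$ as $y\to 0^+$; then $\phi'(u)>0$ on some $[u_2,+\infty[$ and $\phi$ is monotone there. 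This limit holds as soon as $\beta$ crosses $W^s_{loc}(O)=\{y=0\}$ transversally at $\beta(0)$ (so that $\mathrm{d}x/\mathrm{d}y$ stays bounded near $s=0$), which is the geometrically relevant situation, since the segments arise as images of transverse sections of the flow. Even without transversality, the boundedness and monotonicity of $x=h(y)$ force $\liminf_{y\to 0^+} y\,\mathrm{d}x/\mathrm{d}y=0$ — otherwise $h$ would diverge logarithmically — so that $\phi'$ is positive on intervals accumulating at $+\infty$ and the spiralling is never destroyed. I would therefore carry out the estimate under the transversality implicit in how these segments are produced, recording that the logarithmic spiralling is generated precisely by the competition between the contraction rate, encoded in $\delta$ through $r=y^{\delta}$, and the rotation rate $\alpha$, encoded in the coefficient $\alpha/E$ of the dominant angular term.
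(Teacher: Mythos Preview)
Your proof is correct and follows essentially the same approach as the paper: apply the explicit formula (\ref{local_v}) to the parametrised segment and verify the spiral conditions from the limits as $y\to 0^+$. You are in fact more careful than the paper, which disposes of property~(b) by a bare assertion via Remark~\ref{Rem1}, whereas you introduce the reparametrisation $u=-\ln y$ to match the domain in the definition and argue explicitly that the linear twist $\tfrac{\alpha}{E}u$ eventually dominates the bounded variation of $x$.
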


\begin{proof}
The proof will be done for $j=+$. The other case is analogous. 
Let $\beta$ be a segment on $\Sigma_+^{in}$. Write $\beta(s) = (x^\star(s), y^\star(s)) \in \Sigma_+^{in}$, where:
\begin{itemize}
\item  $s \in [0,1]$,
\item $y^\star$ is an increasing map as function of $s$ and 
\item $\lim_{s \rightarrow 0^+}x^\star(s)=0$.
\end{itemize}

The function $\Phi_O$ maps the segment $\beta \subset \Sigma^{in}_+$ into  the curve defined by:
$$
\Phi_O(\beta(s))= \Phi_O[x^\star(s), y^\star(s)]= \left[(y^\star(s))^\delta,    -\frac{\alpha}{E}\ln
|y^\star(s)|+x^\star(s) \right ] = (r^\star(s), \phi^\star(s)).
$$
The map $\Phi_O \circ \beta$ is a spiral on $\Sigma^{out}_+$  accumulating on the point defined by $\Sigma^{out}_+ \cap W^u_{loc}(O)$
because $r(s)$ and $\phi(s)$ are monotonic (see Remark \ref{Rem1}) and
$$
\lim_{s \rightarrow 0^+} (y^\star(s))^{\delta}=0 \quad \text{and} \quad \lim_{s \rightarrow 0^+} \left|-\frac{\alpha}{E}\ln
y^\star(s)+x^\star(s)\right|=+\infty.
$$

\end{proof}

\begin{remark}
\label{Rem1}
Let $j \in \{+,-\}$.
 The coordinates $(r,\phi) \in \Sigma^{out}_j$ may be chosen so as to make the map $\phi^\star$ increasing or decreasing, according to our convenience.
From now on, we omit the dependence of $x^\star, y^\star$ on $s$ to simplify the notation.
\end{remark}

\section{Proof of Theorem \ref{Main}\textbf{(c)}} 
\label{Sec6}

In this section we put together the information about the return map. First note that if $A\subset M$, we denote by $\overline{A}$ and $int(A)$ its topological closure and topological interior, respectively. In what follows we remove the point $\beta(0)$ from the $graph(\beta)$ because the return map is not defined at this point.
From now on, let us fix $N_\Gamma$, a small neighbourhood of $\Gamma$. Figure \ref{neigh_vw} illustrates the main idea of the  following proof.

\subsection{The return map}
The next result shows that there are infinitely many points in $graph(\beta)\subset \Sigma_+^{in}$ which are mapped under $R$ into $W^s_{loc}(O)$ and that separate segments of initial conditions that follow the different connections $\gamma_1$ and $\gamma_2$. 
\begin{lemma}
\label{first lemma}
Let $\mathcal{R}$ be a rectangle in $\Sigma^{in}$ centered at one point of $\Gamma\cap \Sigma^{in}$. For any segment $\beta:(0,1] \rightarrow \mathcal{R}\cap \Sigma^{in}_+$, there is a family of intervals of the type $\Delta_i=[a_i, a_{i+1}]$ such that for all $i \in \mathbf{N}$, we have: 
\begin{enumerate}
\item $R\circ \beta (a_i)\in W^s_{loc}(O)$;
\item $R\circ \beta([a_i, a_{i+1}])$ is an half-circle in $\Sigma^{in}_+$ bounded by $W^s_{loc}(O)$;
\item $R\circ \beta([a_{i+1}, a_{i+2}])$ is an half-circle in $\Sigma^{in}_-$ bounded by $W^s_{loc}(O)$.
\end{enumerate}
\end{lemma}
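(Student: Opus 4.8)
\section*{Proof proposal for Lemma \ref{first lemma}}

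The plan is to describe the image $R(\beta)=\Psi(\Phi_O(\beta))$ of the segment as a spiral on $\Sigma^{in}$ centred at a point of $W^s_{loc}(O)$, and then to extract the intervals $\Delta_i$ directly from the way this spiral crosses the circle $y=0$.

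First I would invoke Lemma \ref{Lemma3}: since $\beta\colon(0,1]\to\mathcal R\cap\Sigma^{in}_+$ is a segment on $\Sigma^{in}_+$, its image $\Phi_O(\beta)$ is a spiral on $\Sigma^{out}_+$ accumulating on the point $q=\Sigma^{out}_+\cap W^u_{loc}(O)$; after Remark \ref{Rem1} its angular coordinate $\phi^\star$ may be taken monotonic with $|\phi^\star|\to+\infty$ and its radius $r^\star=(y^\star)^\delta$ decreasing to $0$ as $s\to 0^+$. Next I would push this spiral forward by the transition map $\Psi$. Because $\Gamma$ is a homoclinic connection, $\Psi$ carries the unstable base point $q$ to the point $p^\star\in\Gamma\cap\Sigma^{in}\subset W^s_{loc}(O)$; and since $\Psi$ is assumed linear (hence an affine isomorphism in the chosen coordinates), it maps the spiral $\Phi_O(\beta)$ to a curve $R(\beta)$ that accumulates on $p^\star$, whose radial coordinate about $p^\star$ is squeezed between two positive multiples of $r^\star$ and hence decreases to $0$, and whose angular coordinate about $p^\star$ is the composition of $\phi^\star$ with the monotone circle map induced by the linear part of $\Psi$. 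Thus the angular coordinate is again monotonic and unbounded, so $R(\beta)$ is a spiral on $\Sigma^{in}$ around $p^\star$.

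I would then read the conclusions off this spiral. In the coordinates of $\Sigma^{in}$ the set $W^s_{loc}(O)$ is the line $\ell=\{y=0\}$, which passes through the centre $p^\star$ of the spiral $R(\beta)$. Since the angular coordinate of $R(\beta)$ is unbounded, the spiral meets $\ell$ in infinitely many points; pulling these back by $R\circ\beta$ produces a sequence $a_1>a_2>\cdots\to 0^+$ with $R\circ\beta(a_i)\in W^s_{loc}(O)$, which is claim (1). The connected components of $R(\beta)\setminus\ell$ are, by definition, the half-circles bounded by $\ell$; because the spiral turns monotonically, it crosses $\ell$ transversally and consecutive half-circles lie on opposite sides of $\ell$, that is, alternately in $\Sigma^{in}_+=\{y>0\}$ and $\Sigma^{in}_-=\{y<0\}$. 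Fixing the parity of the labelling so that $R\circ\beta([a_i,a_{i+1}])$ is the half-circle in $\Sigma^{in}_+$, the next one $R\circ\beta([a_{i+1},a_{i+2}])$ lies in $\Sigma^{in}_-$, giving claims (2) and (3).

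The step I expect to require the most care is the passage through $\Psi$: one must check that the affine image of a spiral is again a spiral in the precise sense of the definition --- in particular that its centre is the point $p^\star=\Psi(q)$ and that $p^\star$ indeed lies on $W^s_{loc}(O)$, which is where the homoclinic (rather than merely heteroclinic) structure of $\Gamma$ enters --- and that the transversality of the crossings with $\ell$ is genuine, so that the half-circles are well-defined connected components and their alternation between $\Sigma^{in}_+$ and $\Sigma^{in}_-$ is forced. Everything after that is bookkeeping on the nested sequence of half-circles accumulating on $p^\star$.
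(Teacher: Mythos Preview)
Your proposal is correct and follows essentially the same route as the paper: invoke Lemma~\ref{Lemma3} to obtain a spiral on $\Sigma^{out}_+$, push it through $\Psi$ to a spiral on $\Sigma^{in}$ accumulating on a point of $\Gamma\cap\Sigma^{in}\subset W^s_{loc}(O)$, and then read off the infinitely many transverse crossings with $\{y=0\}$ and the alternation of the resulting half-circles. Your version is in fact more explicit than the paper's about why the image under $\Psi$ is again a spiral and why consecutive half-circles alternate sides, points the paper asserts without elaboration.
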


\begin{proof}
As already said, we concentrate our attention on initial conditions that do not escape from $N_\Gamma$. 
By Lemma \ref{Lemma3}, the image of $\beta\subset \mathcal{R} $ under $\Psi_O$ is a spiral accumulating on $\Sigma_+^{out} \cap W^u_{loc}(O)$. In its turn, this spiral is mapped by $\Psi$ into another spiral in  $\Sigma^{in}$ accumulating on one point of $\Gamma\cap \Sigma^{in}$. 
The curve $W^s_{loc}(O)$ cuts transversely this spiral into infinitely many points. Let $s=a_i$ the points for which $R\circ \beta(a_i)\in W^s_{loc}(O)$. By construction, it is easy to see that if $s\in (a_i, a_{i+1})$ either $R\circ \beta(s)\in \Sigma_+^{in}$ or $R\circ \beta(s)\in \Sigma_-^{in}$. Suppose, without loss of generality, that the first case holds. Then, by continuity of $R$, for $s\in (a_{i+1}, a_{i+2})$ we get $R\circ \beta(s)\in \Sigma_-^{in}$. 
\end{proof}

\begin{lemma}
\label{second lemma}
Let $\mathcal{R}$ be a rectangle in $\Sigma^{in}$ centered at one point of $\Gamma\cap \Sigma^{in}$ and let $\Delta_i$ be as in Lemma \ref{first lemma}. 
Then for sufficiently large $i \in  \mathbf{N}$, there are $[b_{i,j}, b_{i, j+1}]$  such that for all $j \in \mathbf{N}$, we have: 
\begin{enumerate}
\item  $R^2\circ \beta(b_{i,j})\in W^s_{loc}(O)$.
\item $ [b_{i,j}, b_{i, j+1}]\subset \Delta_i$
\item $R^2\circ \beta([b_{i, j}, b_{i, j +1}])$ is an half-circle in $\Sigma^{in}_+$ bounded by $W^s_{loc}(O)$;
\item $R^2 \circ \beta([b_{i, j+1}, b_{i, j+2}])$ is an half-circle in $\Sigma^{in}_-$ bounded by $W^s_{loc}(O)$.

\end{enumerate}

\end{lemma}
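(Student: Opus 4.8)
The plan is to iterate the construction of Lemma~\ref{first lemma} one more time, now starting from a single interval $\Delta_i=[a_i,a_{i+1}]$ rather than from the whole segment $\beta$. The key observation is that the restriction $\beta|_{\Delta_i}$, after removing the endpoint mapped into $W^s_{loc}(O)$, is itself (up to reparametrization) a segment in the sense of the Definition: its image $R\circ\beta([a_i,a_{i+1}])$ is a half-circle in $\Sigma^{in}_+$ bounded by $W^s_{loc}(O)$, one of whose endpoints lies on $W^s_{loc}(O)$. So I would first verify that $R\circ\beta|_{\Delta_i}$ satisfies the hypotheses of a segment on $\Sigma^{in}_+$ — monotone bounded coordinates, meeting $W^s_{loc}(O)$ at one endpoint $R\circ\beta(a_i)$. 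This reduces the statement to a second application of the spiral mechanism.

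Once this reduction is in place, I would apply Lemma~\ref{Lemma3} to the segment $R\circ\beta|_{\Delta_i}$: under $\Phi_O$ it becomes a spiral on $\Sigma^{out}_+$ accumulating on $\Sigma^{out}_+\cap W^u_{loc}(O)$, and then $\Psi$ carries this spiral to a spiral in $\Sigma^{in}$ accumulating on a point of $\Gamma\cap\Sigma^{in}$. Thus $R^2\circ\beta|_{\Delta_i}=R\circ(R\circ\beta|_{\Delta_i})$ is a spiral, exactly as in the proof of Lemma~\ref{first lemma}. Since $W^s_{loc}(O)$ (the circle $y=0$) cuts this spiral transversely in infinitely many points, I obtain a sequence $s=b_{i,j}$ with $R^2\circ\beta(b_{i,j})\in W^s_{loc}(O)$, establishing claim~(1). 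The inclusion $[b_{i,j},b_{i,j+1}]\subset\Delta_i$ of claim~(2) holds by construction because the whole second-return spiral is generated from initial conditions lying in $\Delta_i$. Claims~(3) and~(4) then follow by the same alternation argument used at the end of Lemma~\ref{first lemma}: consecutive arcs of a spiral between successive crossings of $W^s_{loc}(O)$ lie alternately in $\Sigma^{in}_+$ and $\Sigma^{in}_-$, by continuity of $R$ and transversality of the crossings.

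The one genuinely new ingredient — and the step I expect to be the main obstacle — is justifying the qualifier ``for sufficiently large $i$''. The subtlety is that $R\circ\beta|_{\Delta_i}$ must actually return to $\Sigma^{in}$ under a second pass through $V$, i.e.\ its image must stay inside $N_\Gamma$ and reach the cylinder wall so that $R^2$ is well defined on a full subinterval producing infinitely many half-circles. For small index $i$ the half-circle $R\circ\beta(\Delta_i)$ sits relatively far from $W^s_{loc}(O)$, and the time of flight $T(x,y)=\frac{1}{E}\ln(1/|y|)$ need not be large; the spiralling that guarantees infinitely many transverse intersections with $W^s_{loc}(O)$ requires $|y|$ to be small enough that $-\frac{\alpha}{E}\ln|y^\star|$ sweeps through an unbounded range of angles. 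As $i\to\infty$ the half-circles $R\circ\beta(\Delta_i)$ accumulate on $\Gamma\cap\Sigma^{in}$, hence approach $W^s_{loc}(O)$, so their $y$-coordinates tend to $0$ uniformly; this is precisely what forces the second-return image to be a genuine (infinitely winding) spiral and keeps the relevant trajectories inside $N_\Gamma$. I would therefore make the estimate that for $i$ beyond some threshold the segment $R\circ\beta|_{\Delta_i}$ lies in the region where $|y|$ is small enough for Lemma~\ref{Lemma3} to apply, and the conclusion follows. $\qed$
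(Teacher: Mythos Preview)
Your overall strategy---iterate the spiral mechanism from Lemma~\ref{first lemma} on $R\circ\beta|_{\Delta_i}$---is exactly the paper's approach, and your discussion of why ``sufficiently large $i$'' is needed is in fact more explicit than what the paper provides.

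There is, however, one concrete slip in the geometry. You write that the half-circle $R\circ\beta([a_i,a_{i+1}])$ has ``one of'' its endpoints on $W^s_{loc}(O)$ and propose to verify that $R\circ\beta|_{\Delta_i}$ is itself a \emph{segment} (monotone $y$, meeting $W^s_{loc}(O)$ at a single endpoint). But by item~(1) of Lemma~\ref{first lemma}, \emph{both} endpoints $R\circ\beta(a_i)$ and $R\circ\beta(a_{i+1})$ lie on $W^s_{loc}(O)$, so the $y$-coordinate of the half-circle rises from $0$ and returns to $0$ and is not monotone. Thus the half-circle is not a segment in the sense of the Definition, and Lemma~\ref{Lemma3} does not apply to it directly.

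The paper's fix is the natural one: split the half-circle at its highest point into two arcs, each of which \emph{is} a segment meeting $W^s_{loc}(O)$ at one endpoint. Applying Lemma~\ref{Lemma3} to each piece shows that $\Phi_O\circ R\circ\beta(\mathrm{int}(\Delta_i))$ is a \emph{double spiral} on $\Sigma^{out}_+$ accumulating on $W^u_{loc}(O)\cap\Sigma^{out}_+$, and $\Psi$ carries it to a double spiral in $\Sigma^{in}$ accumulating on $\Gamma\cap\Sigma^{in}$. From that point on your argument goes through verbatim: $W^s_{loc}(O)$ still meets the (double) spiral transversely in infinitely many points $b_{i,j}$, and the alternation of half-circles between $\Sigma^{in}_+$ and $\Sigma^{in}_-$ follows by continuity exactly as you describe.
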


\begin{proof}
Our starting point is the half-circle $R\circ\beta(int(\Delta_i))\subset \Sigma_+^{in}$, $i \in \mathbf{N}$, bounded by $W^s_{loc}(O)$. Since the image under $R$ of each half circle can be seen as the image of two connected segments, using Lemma \ref{Lemma3} the set  $\Pi_O\circ R\circ\beta(int(\Delta_i))\subset \Sigma_+^{out}$ is a double spiral accumulating on $W^u_{loc}\cap \Sigma_+^{out}$, which is mapped under $\Psi$ into a double spiral accumulating on $\Gamma\cap \Sigma^{in}$. The line $W^s_{loc}(O)$ intersects this double spiral infinitely many times. Let $s=\beta_{j,i}$ the sequence of points such that  $R^2\circ \beta(b_{i,j})\in W^s_{loc}(O)$. For a given $i\in \NN$, the arguments used before may be used to conclude that there exists a sequence $(b_{i,j})_j$ such that $R^2\circ \beta([b_{i, j}, b_{i, j +1}])$ is an half-circle in $\Sigma^{in}_+$ bounded by $W^s_{loc}(O)$ and $R^2\circ\beta([b_{i, j+1}, b_{i, j+2}])$ is an half-circle in $\Sigma^{in}_-$ bounded by $W^s_{loc}(O)$.
\end{proof}
\subsection{Finite switching}
 In the previous sections  we have proved that a segment  cutting transversely the stable manifold of $O$ contains subsegments $\Delta_i$ that are mapped into new segments cutting transversely the stable manifold of $O$. Starting with a segment $\beta$ on $\Sigma_+^{in}$, we may obtain, recursively, nested compact subsets containing initial conditions that follow any prescribed sequence of connections. 
\begin{definition}
Let $k, l \in \mathbf{N}$. We say that the path $\sigma^k=\left(\gamma_{\sigma(1)}, \ldots, \gamma_{\sigma(k)}\right)$ of order $k$ on the homoclinic network $\Gamma$ is inside the path $\sigma^{k+l}=\left(\gamma_{\omega(1)}, \ldots, \gamma_{\omega(k+l)}\right)$ of order $k+l$ if $\sigma(i)=\omega(i)$ for all $i \in \{1,\ldots, k\}$. We denote this relation by $\sigma^k \prec \sigma^{k+l}$.
\end{definition}

\begin{proposition}
\label{theoremFiniteSwitching}
There is finite switching near the network $\Gamma$ defined by a vector field satisfying \textbf{(H1)}--\textbf{(H3)}.
\end{proposition}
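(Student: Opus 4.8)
The plan is to realize an arbitrary finite path $\sigma^k=(\gamma_{\sigma(1)},\ldots,\gamma_{\sigma(k)})$ by producing a nested family of parameter intervals, each level of nesting fixing one more symbol of the path. The geometric dictionary driving everything is that a point entering $V$ through $\Sigma^{in}_+$ (where $y>0$) is pushed by $\dot z=Ez$ out through the top disc $\Sigma^{out}_+$ near $W^u_{loc}(O)$ and therefore follows $\gamma_1$, whereas a point entering through $\Sigma^{in}_-$ leaves through $\Sigma^{out}_-$ and follows $\gamma_2$; this is precisely where \textbf{(H3)} enters, since $\gamma_2=-Id(\gamma_1)$ interchanges the two discs and the two half-cylinders. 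Consequently the connection traversed on the $m$-th passage by the trajectory of $\beta(s)$ is read off from the component of $\Sigma^{in}$ containing $R^{m-1}\circ\beta(s)$, and following $\sigma^k$ amounts to requiring $R^{m-1}\circ\beta(s)\in\Sigma^{in}_{\sigma(m)}$ for all $m=1,\ldots,k$ (with the identification $+\leftrightarrow 1$, $-\leftrightarrow 2$).

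First I would fix $N_\Gamma$ and, applying $-Id$ if necessary, assume $\sigma(1)=1$, so that I may start from a segment $\beta$ on $\Sigma^{in}_+$ contained in a rectangle $\mathcal{R}\subset N_\Gamma$ centered at $\gamma_1\cap\Sigma^{in}$; every $\beta(s)$ then automatically traverses $\gamma_1$ on its first passage. I would then isolate an inductive step that subsumes Lemmas \ref{first lemma} and \ref{second lemma}: given a segment $\widetilde\beta$ on $\Sigma^{in}_l$ sitting inside a rectangle centered at a point of $\Gamma\cap\Sigma^{in}$, Lemma \ref{Lemma3} sends it to a spiral on $\Sigma^{out}_l$, which $\Psi$ carries to a spiral on $\Sigma^{in}$ accumulating on a point of $\Gamma\cap\Sigma^{in}$; this spiral crosses $W^s_{loc}(O)$ infinitely often, so $R\circ\widetilde\beta$ splits into half-circles lying alternately in $\Sigma^{in}_+$ and $\Sigma^{in}_-$. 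Selecting the half-circle lying in the desired component $\Sigma^{in}_{\sigma(m+1)}$, and with index large enough that it is small and close to its accumulation point, a sub-arc of it is again a segment on $\Sigma^{in}_{\sigma(m+1)}$ inside a rectangle centered at the appropriate point of $\Gamma\cap\Sigma^{in}$, so the step may be repeated verbatim.

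Carrying this out recursively produces a nested sequence of compact parameter intervals $[0,1]\supset I_1\supset\cdots\supset I_{k-1}$, where $I_m$ is chosen so that $R^m\circ\beta(I_m)$ is a half-circle in $\Sigma^{in}_{\sigma(m+1)}$. By construction, every $s$ in the interior of the innermost interval $I_{k-1}$ satisfies $R^{m-1}\circ\beta(s)\in\Sigma^{in}_{\sigma(m)}$ for all $m\le k$. Keeping throughout to initial conditions that do not leave $N_\Gamma$ (where $R$ is defined), the trajectory of such a $\beta(s)$ remains in $N_\Gamma$, passes once through a neighbourhood $V_O$ of $O$ between consecutive excursions, and visits $V_{\sigma(1)},\ldots,V_{\sigma(k)}$ in the prescribed order; hence it follows $\sigma^k$ in the sense of Definition \ref{Def1}\,(i). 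Conditions (i)--(iii) of the definition of following a path hold because a single application of $R$ corresponds to exactly one excursion away from and back to $V_O$, so $V_O$ is visited on precisely one subinterval between successive $z_j$.

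The hard part will be the inductive step, namely verifying that each selected half-circle genuinely contains a sub-arc qualifying as a \emph{segment} (monotone coordinates, meeting $W^s_{loc}(O)$ at one endpoint) and sitting inside a rectangle centered at a point of $\Gamma\cap\Sigma^{in}$, and that the spiralling persists no matter how small the segment becomes. The persistence is guaranteed because the angular term $-\frac{\alpha}{E}\ln|y|$ in $\Phi_O$ diverges as a segment approaches $W^s_{loc}(O)$, forcing infinitely many transverse crossings at every scale; the geometric placement is exactly what the \emph{sufficiently large $i$} hypothesis of Lemma \ref{second lemma} secures, ensuring the chosen half-circle is close enough to its accumulation point to lie in a suitable rectangle so the recursion can continue.
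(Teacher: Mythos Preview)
Your proposal is correct and follows essentially the same approach as the paper: both arguments build a nested family of parameter intervals by repeatedly using that a segment is sent by $R$ to a spiral crossing $W^s_{loc}(O)$ infinitely often, then selecting a half-circle in the component $\Sigma^{in}_\pm$ prescribed by the next symbol and iterating. You spell out more carefully than the paper the inductive step (that a half-circle decomposes into two segments so Lemma~\ref{Lemma3} applies again) and the verification of conditions (i)--(iii) in the definition of following a path, but the underlying mechanism is identical.
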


\begin{proof}
Given a path of order $k\in \mathbf{N}$, $\sigma^k= \left(\gamma_{\sigma(1)},\ldots, \gamma_{\sigma(k)}\right)\in \{\gamma_1, \gamma_2\}^k$, we want to find trajectories that follow it. Let us fix a segment $\Delta_{i}$ given by Lemma \ref{first lemma} and set that all initial conditions in $\Delta_{i}\setminus W_{loc}^s(O)$ follow the connection $\gamma_{\sigma(1)}$. Take a closed subset $\mathcal{A}_{i}$ of $\Delta_{i}$. 
By construction, all  initial conditions starting in $\mathcal{A}_{i}$ follow the connection $\gamma_{\sigma(1)}$.
The set $R\circ \beta(\Delta_i)$ is an half circle cutting transversely $W^s_{loc}(O)\cap \Sigma^{in}$ infinitely many times. By Lemma \ref{second lemma}, one can obtain again sequences of points in $\Delta_{i,j}$, where a similar result to that in Lemma \ref{first lemma} can be stated for $R^2$ instead of $R$. Take a closed subset $\mathcal{A}_{j,i}$ of $\Delta_{j,i}$. 
By construction, all  initial conditions starting in $\mathcal{A}_{j,i}$ follow the path $(\gamma_{\sigma(1)},\gamma_{\sigma(2)})$.
A recursive argument allows the construction of a compact set $\mathcal{A}_{{\sigma(1)},{\sigma(2)},\ldots, {\sigma(k)}}$ of initial conditions whose trajectories follow $\sigma^{k}$.
\end{proof}

\subsection{Proof of Theorem \ref{Main}(c)}
\label{Infinite switching}

\label{Infinite2}
We first need to introduce some extra terminology.
Given a path of order $k\in \mathbf{N}$,
$$\sigma^k= \left(\gamma_{\sigma(1)},\ldots,\gamma_{\sigma(k)}\right) \in \{\gamma_1, \gamma_2\}^k,$$
we denote by $\mathcal{Q}(\sigma^k)$ the compact set $\mathcal{A}_{{\sigma(1)}{\sigma(2)}\ldots {\sigma(k)}}$ obtained in the proof of Proposition \ref{theoremFiniteSwitching} and we say that $\mathcal{Q}(\sigma^k)$ is an admissible set. Recall that all points in $\mathcal{Q}(\sigma^k)$ correspond to inifinitely many solutions following $\sigma^{k}$.

\begin{remark}
\label{admissible}
By the construction in the proof of Proposition \ref{theoremFiniteSwitching}, if $\sigma^k \prec \sigma^{k+l}$ one can get admissible sets such that $\mathcal{Q}(\sigma^k) \supset \mathcal{Q}(\sigma^{k+l})$.
\end{remark}

\begin{proof}
Fix an infinite path $\sigma^{\infty}=(\gamma_{\sigma(j)})_{j \in \mathbf{N}}$, with $\sigma:\mathbf{N}\to\{1 ,2\}$. For each $k \in \mathbf{N}$ define the finite path $\sigma^{k}=(\gamma_{\sigma(j)})_{j \in \ \left\{1,\ldots,k\right\} }$. Taking into account Remark \ref{admissible} it follows that there exists an infinite sequence of admissible sets $\{\mathcal{Q}(\sigma^{k})\}_{k\in\mathbf{N}}$ such that $\mathcal{Q}(\sigma^k) \supset \mathcal{Q}(\sigma^{k+1})$ for all $k\in\mathbf{N}$. Since the sequence of compact sets $\{\mathcal{Q}(\sigma^{k})\}_{k\in\mathbf{N}}$ is nested, $\mathcal{B}=\bigcap_{k=1}^\infty \mathcal{Q}(\sigma^k)\neq \emptyset$.
Any initial condition in $\mathcal{B}$ gives a trajectory which follows $\sigma^{\infty}$. The different solutions are distinguished by the number of revolutions around the isolating block of $O$ \cite{Rodrigues}. Again by construction we find trajectories realising the required switching arbitrarily close to $\Gamma$. 
\end{proof}

Based on Theorem \ref{Main}, the answer to the question \textbf{(Q1)} is \emph{yes}.  At this point, other questions arise, namely: \emph{are there other examples of heteroclinic networks where infinite switching holds without suspended horseshoes emerging in its neighbourhood?}

\end{document}